% interactcadsample.tex
% v1.03 - April 2017

\documentclass[]{interact}

\usepackage{epstopdf}% To incorporate .eps illustrations using PDFLaTeX, etc.
\usepackage{subfigure}% Support for small, `sub' figures and tables

\usepackage{natbib}% Citation support using natbib.sty
\bibpunct[, ]{(}{)}{;}{a}{}{,}% Citation support using natbib.sty
% Bibliography support using natbib.sty

\theoremstyle{plain}% Theorem-like structures provided by amsthm.sty
\newtheorem{theorem}{Theorem}[section]
\newtheorem{lemma}[theorem]{Lemma}

\theoremstyle{definition}
\newtheorem{definition}[theorem]{Definition}
\newtheorem{example}[theorem]{Example}

\theoremstyle{remark}
\newtheorem{remark}{Remark}
\newtheorem{notation}{Notation}

\usepackage{float}

\begin{document}

	\title{Graph theoretic proofs for some results on banded inverses of $M$-matrices}
	
	\author{
		\name{S. Pratihar\textsuperscript{a}
        %\thanks{CONTACT S. Pratihar. Email: pratiharsamapti12345@gmail.com}
        and K.C. Sivakumar\textsuperscript{a}\thanks{CONTACT K.C. Sivakumar. Email: kcskumar@iitm.ac.in}}
		\affil{\textsuperscript{a}Department of Mathematics, Indian Institute of 
		Technology Madras,
		Chennai, 600036, India}}

	\maketitle
	
	\begin{abstract}
This work concerns results on conditions guaranteeing that certain banded $M$-matrices have banded inverses. As a first goal, a graph theoretic characterization for an off-diagonal entry of the inverse of an $M$-matrix to be positive, is presented. This result, in turn, is used in providing alternative graph theoretic proofs of the following: (1) a characterization for a tridiagonal $M$-matrix to have a tridiagonal inverse. (2) a necessary condition for an $M$-matrix to have a pentadiagonal inverse. The results are illustrated by several numerical examples.
\end{abstract}
	
	\begin{keywords}
		Tridiagonal matrix; Pentadiagonal matrix; $M$-matrix; Digraph
	\end{keywords}

    \begin{amscode}
        05C50; 05C20; 15A09; 15B48
    \end{amscode}
	
\section{Introduction}
Let $\textbf{M}_n \mathbb{(R)}$ denote the set of all real square matrices of order $n$. For $A \in \textbf{M}_n \mathbb{(R)}$ we use the notation $A \geq 0$ to signify that the matrix $A$ has all its entries non-negative. For a vector $x$ with real coordinates, we use $x>0$ to denote that all the coordinates of $x$ are positive. The determinant of $A$ is denoted by $\det A$. Let $\alpha , \beta \subseteq \{1,2,\dotsc,n\}$, whose elements are assumed to be arranged in the ascending order. We use $A[\alpha | \beta]$ to denote the submatrix of $A$ containing the rows indexed by $\alpha$ and the columns indexed by $\beta$. $A[\alpha]:=A[\alpha|\alpha]$ denotes a principal submatrix of $A$ and $A(\alpha)$ denotes the principal submatrix $A[\alpha^{c}]$, where $\alpha^{c}$ denotes the complement of $\alpha$ in $\{1,2,\dotsc,n\}$. 

A {\it $Z$-matrix} is a real matrix whose off-diagonal entries are non-positive. Any such matrix $A$ may be represented as $A=sI-B$, where $s$ is a real number and $B \geq 0$. In particular, if $s > \rho(B)$, the spectral radius of $B$, then, we refer to $A$ as an {\it $M$-matrix}. The book \cite{Ber94} has more than fifty necessary and sufficient conditions for a $Z$-matrix to be an $M$-matrix. In what follows, we present a sample list. 

\begin{theorem}\label{invertible M matrix} Let $A$ be a $Z$-matrix. Then the following are equivalent: 
\begin{enumerate}
 \item  $A$ is an $M$-matrix.
    \item Every real eigenvalue of each principal submatrix of $A$ is positive.
    \item  All the principal minors of $A$ are positive.
    \item  $A^{-1}$ exists and $A^{-1} \geq 0$.
    \item There exists $x>0$ such that $Ax>0.$
\end{enumerate}
\end{theorem}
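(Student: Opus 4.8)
The plan is to prove the five statements equivalent by running the cycle $(1)\Rightarrow(4)\Rightarrow(5)\Rightarrow(1)$ and, separately, the chain $(1)\Rightarrow(2)\Rightarrow(3)\Rightarrow(4)$; since $(1)$ occurs in both, all five conditions close up. Throughout, the workhorse is elementary Perron--Frobenius theory for nonnegative matrices: that $\rho(B)$ is itself an eigenvalue of $B$ whenever $B\geq 0$; that $\rho$ is monotone, so $\rho(B[\alpha])\leq\rho(B)$ for every index set $\alpha$; and the Collatz--Wielandt-type bound $\rho(B)\leq\max_i (Bx)_i/x_i$, valid for any $x>0$.

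First, $(1)\Rightarrow(4)$: writing $A=sI-B$ with $B\geq 0$ and $s>\rho(B)\geq 0$, one has $A=s(I-B/s)$ with $\rho(B/s)<1$, so the Neumann series $\sum_{k\geq 0}(B/s)^{k}$ converges to $(I-B/s)^{-1}$ and is termwise nonnegative; hence $A^{-1}=\tfrac1s(I-B/s)^{-1}\geq 0$ exists. Next, $(4)\Rightarrow(5)$: put $x=A^{-1}e$ with $e=(1,\dots,1)^{T}$; since the invertible matrix $A^{-1}$ has no zero row and is nonnegative, $x>0$, while $Ax=e>0$. Finally, $(5)\Rightarrow(1)$: given $x>0$ with $Ax>0$, pick $s$ no smaller than every diagonal entry of $A$ and set $B=sI-A$; then $B\geq 0$ because $A$ is a $Z$-matrix, and $Ax>0$ reads $(Bx)_i<s\,x_i$ for all $i$, whence $\rho(B)\leq\max_i (Bx)_i/x_i<s$ (and since replacing $s$ by $s'$ shifts $s$ and $\rho(B)$ by the same amount, this last relation is a property of $A$ alone), so $A$ is an $M$-matrix.

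Now $(1)\Rightarrow(2)$: for any principal submatrix, $A[\alpha]=sI-B[\alpha]$ with $B[\alpha]\geq 0$ and $\rho(B[\alpha])\leq\rho(B)<s$; if $\lambda$ is a real eigenvalue of $A[\alpha]$ then $s-\lambda$ is a real eigenvalue of $B[\alpha]$, hence $s-\lambda\leq\rho(B[\alpha])<s$ and $\lambda>0$. Then $(2)\Rightarrow(3)$: $\det A[\alpha]$ is the product of the eigenvalues of the real matrix $A[\alpha]$; the non-real ones pair into conjugates contributing factors $|\lambda|^{2}>0$, and the real ones are positive by $(2)$, so $\det A[\alpha]>0$.

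The remaining and most substantial step is $(3)\Rightarrow(4)$, which I would handle by induction on the order $n$, the case $n=1$ being trivial. For the step, partition
\[ A=\begin{pmatrix} A' & b \\ c^{T} & a_{nn}\end{pmatrix},\qquad A'=A[\{1,\dots,n-1\}], \]
noting that $A'$ is a $Z$-matrix whose principal minors, being principal minors of $A$, are positive, so $(A')^{-1}\geq 0$ by the induction hypothesis, while $b\leq 0$, $c\leq 0$, $a_{nn}>0$. The Schur complement $\delta=a_{nn}-c^{T}(A')^{-1}b$ satisfies $\det A=\det A'\cdot\delta$, hence $\delta>0$. Expanding $A^{-1}$ by the usual $2\times 2$ block formula, its bottom-right entry is $\delta^{-1}>0$, its off-diagonal blocks $-\delta^{-1}(A')^{-1}b$ and $-\delta^{-1}c^{T}(A')^{-1}$ are nonnegative (a positive scalar times nonpositive data carrying an overall minus sign), and its top-left block $(A')^{-1}+\delta^{-1}(A')^{-1}b\,c^{T}(A')^{-1}$ adds to $(A')^{-1}\geq 0$ the outer product of the two nonpositive vectors $(A')^{-1}b$ and $c^{T}(A')^{-1}$ scaled by $\delta^{-1}>0$, which is again nonnegative; thus $A^{-1}\geq 0$. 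I expect the only real care to be needed in the sign bookkeeping of this block computation and in ensuring the Perron--Frobenius facts are applied only to genuinely nonnegative matrices; the rest is routine.
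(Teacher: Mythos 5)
The paper itself offers no proof of this theorem: it is quoted as a standard compendium of $M$-matrix characterizations with a citation to Berman and Plemmons, so there is no argument of the authors' to compare yours against. Judged on its own, your proof is correct and self-contained, and the implication structure ($(1)\Rightarrow(4)\Rightarrow(5)\Rightarrow(1)$ together with $(1)\Rightarrow(2)\Rightarrow(3)\Rightarrow(4)$) does close up all five conditions. Each step is the standard textbook argument and I find no gaps: the Neumann series for $(1)\Rightarrow(4)$; the observation that an invertible nonnegative matrix has no zero row for $(4)\Rightarrow(5)$; the Collatz--Wielandt bound $\rho(B)\leq\max_i (Bx)_i/x_i$ for $(5)\Rightarrow(1)$ (and you rightly note that, since the definition of an $M$-matrix only asks for \emph{some} representation $A=sI-B$ with $B\geq 0$ and $s>\rho(B)$, exhibiting one suffices --- the parenthetical about shifting $s$ is not even needed); monotonicity of the spectral radius on nonnegative matrices plus the fact that a real eigenvalue of $B[\alpha]$ is at most $\rho(B[\alpha])$ for $(1)\Rightarrow(2)$; conjugate pairing of eigenvalues for $(2)\Rightarrow(3)$; and the Schur-complement induction with the block-inverse sign bookkeeping for $(3)\Rightarrow(4)$, where indeed $\delta=\det A/\det A'>0$, the off-diagonal blocks are nonnegative because $(A')^{-1}b\leq 0$ and $c^{T}(A')^{-1}\leq 0$, and the rank-one correction to the top-left block is an outer product of two nonpositive vectors, hence nonnegative. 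The only caveat worth recording is that you are implicitly invoking Perron--Frobenius facts (that $\rho(B)$ is an eigenvalue of $B\geq 0$, monotonicity, Collatz--Wielandt) as known; that is entirely appropriate for a result the paper itself treats as background.
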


It follows, at once, that the diagonal entries of any $M$-matrix must be positive. In numerical examples, the fifth item above is quite useful. In particular, if the row sums of a $Z$-matrix $A$ are positive, then $A$ is an $M$-matrix. We refer the reader to the treatise \cite{Ber94}, for many applications of $M$-matrices.

We recall that a {\it tridiagonal} matrix $A = (a_{i j}) \in \textbf{M}_n \mathbb{(R)}, n \geq 3$ is a square matrix such that $a_{i j} = 0$, for $|i -j| > 1$.  $A \in \textbf{M}_n \mathbb{(R)}, n \geq 4$ is called {\it pentadiagonal}, if $a_{i j} = 0$, for $|i -j| > 2$.

In a recent work, a characterization was given for an $M$-matrix to have a tridiagonal inverse \cite[Theorem 1]{BarPen19}. The authors also considered the case of pentadiagonal matrices \cite[Theorem 2]{BarPen19}. The proofs given in op. cit. are computational in the sense that they employed the Gaussian elimination process for the inverse matrix. In this short note, we present a graph theoretic proof of both these results, which we believe are very elementary.

\section{A basic result on $M$-matrices}

\begin{definition}\label{digraph}
For a matrix $A = (a_{i j}) \in \textbf{M}_n \mathbb{(R)}$, the digraph (directed graph) $\mathcal{D}(A)$ of $A$, has $\{v_1,v_2,\dotsc, v_n\}$ as the vertex set, and whose edge set consists of those ordered pairs $(v_i,v_j), ~i \neq j$, if $a_{i j} \neq 0$.
\end{definition}

\begin{remark}
We shall be making use of a result of \cite{May89}, where the vertices are denoted by $\{1,2,\ldots, n\}$. However, we find it convenient and suggestive to use the notation for the vertices, as in Definition \ref{digraph}. 
\end{remark}

\begin{notation} 
Observe that, the word path always stands for a directed path. A set of distinct vertices $\{v_i, v_{i+1}, \dots, v_{j-1},v_j\}$ is called a path from vertex $v_i$ to vertex $v_j$, in $\mathcal{D}(A)$, if $(v_i, v_{i+1}), (v_{i+1}, v_{i+2}), \dotsc, (v_{j-1}, v_{j})$ are (directed) edges. For the sake of simplicity we use $p(v_i \to v_j)$ to denote this path. The length of $p$ will then be denoted by $l(p)$ and the set of vertices belonging to the path $p$ will be denoted by $V[p]$. The set of vertices of $\mathcal{D}(A)$ not belonging to the path $p$ will be denoted by $V(p)$. If $p$ is a path in $\mathcal{D}(A)$, we let $A[p]$ denote the corresponding product of elements of $A$. Let us make this more precise. For vertices $v_{i_1}, v_{i_k}$, set 
$$A[p(v_{i_1} \to v_{i_k})]=\begin{cases}  1 & \text{ if } k=1, \\ \prod \limits_{r=1}^{k-1}a_{i_{r}i_{r+1}}  & \text{ if } k>1. \end{cases}$$
Thus, $A[p(v_i \to v_j)]$ denotes the product of those elements of $A$, that lie along the given path $p$ of ${\cal D}(A)$, from the vertex $v_i$ to the vertex $v_j$. We set $A[p(v_i \to v_j)] = 0$, if and only if there is no path from vertex $v_i$ to vertex $v_j$.
\end{notation}

The first main result will be proved by employing the next theorem. 

\begin{theorem}\cite[Corollary 9.1]{May89}\label{entries of $A^{-1}$}
Let $A=(a_{ij}) \in \textbf{M}_n \mathbb{(R)}$ be nonsingular. Set $A^{-1} = (\tilde{a}_{i j})$. Then, we have $$ \tilde{a}_{i i} = \det A(i)/ \det A,$$ and 
\begin{equation} \label{inverse entry}\tilde{a}_{i j}= \frac{1}{\det A} \sum_{p(v_i \to v_j)} (-1)^{l(p)} A[p(v_i \to v_j)] \det A[V(p)], \;\;\; \text{ for } i \neq j, 
\end{equation} 

where the sum is taken over all paths $p$ in $\mathcal{D}(A )$ from $v_i$ to $v_j.$
\end{theorem}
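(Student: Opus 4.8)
The plan is to deduce (\ref{inverse entry}) from the classical adjugate identity $A^{-1}=(\det A)^{-1}\,\mathrm{adj}(A)$ by regrouping the Leibniz expansion of a single cofactor according to the cycle structure of the permutations that occur in it. Writing $\alpha_k:=\{1,2,\dots,n\}\setminus\{k\}$, the adjugate identity reads $\tilde a_{ij}=(-1)^{i+j}\det A[\alpha_j\mid\alpha_i]/\det A$; for $i=j$ this is already the asserted $\tilde a_{ii}=\det A(i)/\det A$, so assume $i\neq j$ from now on.

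First I would convert the numerator into a sum over permutations of the whole index set. Expanding $\det A[\alpha_j\mid\alpha_i]$ by the Leibniz formula gives a signed sum over bijections $\sigma\colon\alpha_j\to\alpha_i$, and such bijections correspond one-to-one with permutations $\pi$ of $\{1,2,\dots,n\}$ having $\pi(j)=i$, the correspondence being $\pi|_{\alpha_j}=\sigma$. Under the monotone relabelling of $\alpha_j$ and $\alpha_i$ one has $(-1)^{i+j}\,\mathrm{sgn}(\sigma)=\mathrm{sgn}(\pi)$ and $\prod_{k\neq j}a_{k\sigma(k)}=\prod_{k\neq j}a_{k\pi(k)}$, so $(-1)^{i+j}\det A[\alpha_j\mid\alpha_i]=\sum_{\pi\colon\pi(j)=i}\mathrm{sgn}(\pi)\prod_{k\neq j}a_{k\pi(k)}$.

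Next I would decompose each admissible $\pi$. Since $\pi(j)=i$ and $i\neq j$, the cycle $C_\pi$ of $\pi$ through $j$ also contains $i$; writing it as $j\mapsto i\mapsto i_1\mapsto\cdots\mapsto i_r\mapsto j$, the tuple $p=(v_i,v_{i_1},\dots,v_{i_r},v_j)$ is exactly a path in $\mathcal{D}(A)$ from $v_i$ to $v_j$, its vertex set is $V[p]$, and the vertices outside it are precisely those of $V(p)$. If $\pi'$ denotes the permutation induced by $\pi$ on $V(p)$, then $\prod_{k\neq j}a_{k\pi(k)}=A[p(v_i\to v_j)]\cdot\prod_{v_k\in V(p)}a_{k\pi'(k)}$ --- note that the entry $a_{ji}$ of the closing arc of $C_\pi$ is absent because the index $k=j$ is omitted from the product --- and, since a cycle on $\ell$ vertices has sign $(-1)^{\ell-1}$ while $C_\pi$ has $l(p)+1$ vertices, $\mathrm{sgn}(\pi)=(-1)^{l(p)}\,\mathrm{sgn}(\pi')$. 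Conversely, every path $p$ from $v_i$ to $v_j$ together with an arbitrary permutation of $V(p)$ arises from exactly one such $\pi$. Hence, summing first over the permutation of $V(p)$ --- which reconstitutes $\det A[V(p)]$ --- and then over all paths $p$ from $v_i$ to $v_j$, one gets $(-1)^{i+j}\det A[\alpha_j\mid\alpha_i]=\sum_{p(v_i\to v_j)}(-1)^{l(p)}A[p(v_i\to v_j)]\det A[V(p)]$, and dividing by $\det A$ gives (\ref{inverse entry}). A path that does not exist in $\mathcal{D}(A)$ simply contributes a zero term, in agreement with the convention $A[p(v_i\to v_j)]=0$.

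I expect the only delicate point to be the sign bookkeeping in these two reductions: the identity $(-1)^{i+j}\,\mathrm{sgn}(\sigma)=\mathrm{sgn}(\pi)$ after the monotone relabelling of the deleted row and column, and the multiplicativity $\mathrm{sgn}(\pi)=\mathrm{sgn}(C_\pi)\,\mathrm{sgn}(\pi')$, which must survive the monotone relabelling of the vertices of $V(p)$ as well. Both are standard consequences of the sign lemma underlying the Laplace expansion, so nothing genuinely new is needed; they must, however, be recorded carefully so that the exponent of $-1$ comes out exactly as $l(p)$.
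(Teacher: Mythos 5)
Your argument is correct. Note that the paper itself offers no proof of this statement --- it is imported verbatim from the cited reference \cite[Corollary 9.1]{May89} --- so there is no internal proof to compare against; what you have written is a self-contained derivation that fills that gap, and it follows the standard (and, in essence, the original) route: express $\tilde a_{ij}$ as the cofactor $(-1)^{i+j}\det A[\alpha_j\mid\alpha_i]/\det A$, identify the Leibniz terms of that minor with permutations $\pi$ of $\{1,\dots,n\}$ satisfying $\pi(j)=i$, and split each such $\pi$ into the cycle through $j$ (which, with the closing arc $j\mapsto i$ deleted, is exactly a path $p$ from $v_i$ to $v_j$ contributing the factor $A[p(v_i\to v_j)]$) and its restriction to $V(p)$ (which resummed gives $\det A[V(p)]$). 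The two sign computations you single out are indeed the only delicate points, and both are handled correctly: the cofactor sign lemma gives $(-1)^{i+j}\mathrm{sgn}(\sigma)=\mathrm{sgn}(\pi)$, and a cycle on $l(p)+1$ vertices has sign $(-1)^{l(p)}$, which produces the exponent in (\ref{inverse entry}). The bijectivity of the correspondence $\pi\leftrightarrow(p,\pi')$ and the observation that vertex sequences not forming a path in $\mathcal{D}(A)$ contribute zero terms complete the argument; nothing is missing.
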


Note that if $A$ is an $M$-matrix, then all its principal minors are positive, as mentioned earlier. Thus, from the formula above, it follows that the diagonal entries of its inverse are positive. Our first main result, given below, concerns the off-diagonal entries of $A^{-1}$.

\begin{theorem}\label{graph}
    Let $A$ be an $M$-matrix and set $A^{-1}:=(\tilde{a}_{ij})$. Then  
    $\tilde{a}_{ij}>0, i \neq j$ if and only if there is a path from $v_i$ to $v_j$ in $\mathcal{D}(A)$.
\end{theorem}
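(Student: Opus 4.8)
The plan is to use the explicit formula \eqref{inverse entry} from Theorem \ref{entries of $A^{-1}$} together with the sign information coming from the $M$-matrix structure. The key observation is that for an $M$-matrix $A$, every principal submatrix is again an $M$-matrix (each of its real eigenvalues is positive by item (2) of Theorem \ref{invertible M matrix}), so $\det A[V(p)] > 0$ for every path $p$ (interpreting the empty determinant as $1$), and $\det A > 0$. Moreover, along any path $p = p(v_i \to v_j)$ the relevant entries $a_{i_r i_{r+1}}$ are off-diagonal entries of a $Z$-matrix, hence non-positive, and in fact strictly negative precisely because the corresponding edge exists in $\mathcal{D}(A)$; thus $(-1)^{l(p)} A[p(v_i \to v_j)] = (-1)^{l(p)} \prod_{r} a_{i_r i_{r+1}}$ has sign $(-1)^{l(p)}(-1)^{l(p)} = +1$, i.e. each term in the sum is strictly positive.

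Granting this, the proof splits into the two implications. For the ``only if'' direction I would argue contrapositively: if there is no path from $v_i$ to $v_j$ in $\mathcal{D}(A)$, then the sum in \eqref{inverse entry} is empty, so $\tilde{a}_{ij} = 0$, hence not positive. For the ``if'' direction, suppose there is at least one path from $v_i$ to $v_j$. Then the sum in \eqref{inverse entry} is nonempty, and by the sign analysis above every summand $\frac{1}{\det A}(-1)^{l(p)} A[p(v_i \to v_j)] \det A[V(p)]$ is strictly positive; a nonempty sum of strictly positive terms is strictly positive, so $\tilde{a}_{ij} > 0$.

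The main point requiring care — and the step I expect to be the only real obstacle — is justifying the strict signs: that $a_{i_r i_{r+1}} < 0$ (not merely $\le 0$) for each edge of the path, which is immediate from Definition \ref{digraph} since an edge $(v_{i_r}, v_{i_{r+1}})$ exists exactly when $a_{i_r i_{r+1}} \neq 0$, combined with the $Z$-matrix sign restriction; and that $\det A[V(p)] > 0$ strictly, which uses that a principal submatrix of an $M$-matrix is an $M$-matrix and therefore has positive determinant by item (3) of Theorem \ref{invertible M matrix}, with the convention that the determinant of the $0 \times 0$ matrix (arising when $V(p) = \varnothing$, e.g. when the path visits all vertices) equals $1$. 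Once these sign facts are pinned down, the conclusion follows directly from \eqref{inverse entry} with no further computation. One should also note explicitly that $\tilde{a}_{ij} \ge 0$ always holds (item (4) of Theorem \ref{invertible M matrix}), which makes the dichotomy ``$\tilde{a}_{ij} > 0$ or $\tilde{a}_{ij} = 0$'' transparent and confirms consistency of the argument.
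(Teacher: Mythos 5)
Your proof is correct and follows essentially the same route as the paper: both rest on formula \eqref{inverse entry} together with the positivity of the principal minors of an $M$-matrix and the sign $(-1)^{l(p)}A[p(v_i \to v_j)] > 0$ for each actual path. The only cosmetic difference is that you handle the forward implication contrapositively (no path gives an empty sum, hence $\tilde{a}_{ij}=0$), while the paper argues directly that a positive entry forces some non-negative summand to be positive.
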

\begin{proof}
Since $A$ is an $M$-matrix, each of its principal minors is positive. This implies that $\det A$ as well as $\det A[V(p)]$ are positive, for any path $p$. Also, since $A[p(v_i \to v_j)]$ is a product of $l(p)$ non-positive numbers, we infer that for every path $p$,
$$(-1)^{l(p)} A[p(v_i \to v_j)]  \geq 0$$ and so  
\begin{equation}
(-1)^{l(p)} A[p(v_i \to v_j)]\det A[V(p)] \geq 0. 
\end{equation}
Now, let $\tilde{a}_{ij} >0$ for some $i,j$. Since each term in (\ref{inverse entry}) is non-negative, it follows that there is at least one path $p$ such that $$(-1)^{l(p)} A[p(v_i \to v_j)]\det A[V(p)] > 0,$$ so that $A[p(v_i \to v_j)]\neq 0,$ giving us a required path.\\
Conversely, let there be a path $p$ from $v_i$ to $v_j$ in $\mathcal{D}(A)$, so that $A[p(v_i \to v_j)] \neq 0$. Thus, $$(-1)^{l(p)} A[p(v_i \to v_j)]\det A[V(p)] > 0$$ and so from (\ref{inverse entry}), it follows that $\tilde{a}_{ij}>0$, as the right hand side is a sum of non-negative terms. This completes the proof.
\end{proof}

\begin{example} \label{example for inverse entries using graph}
The conclusion of the Theorem above is false, in general, if $A$ is not an $M$-matrix. \\
Let $$A=\begin{pmatrix}
        1 &1 &1 \\
        0 & 1&0\\
        0&1&1
\end{pmatrix},$$ so that $\mathcal{D}(A)$ is given by\\

\begin{figure}[H]
\centering 
\includegraphics{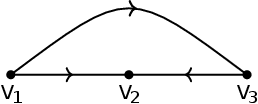}
\caption{Digraph of $A$, $\mathcal{D}(A)$.} 
\label{1st graph}
\end{figure}

Here, $A$ is not even a $Z$-matrix (and hence not an $M$-matrix). We have 
$$A^{-1}=\begin{pmatrix}
        1 &~~0 &-1\\
        0 &~~1&~~0\\
        0&-1&~~1
\end{pmatrix}.$$
Note that, while there is a path from vertex $v_1$ to vertex $v_2$ in $\mathcal{D}(A)$, the corresponding entry in $A^{-1}$ is zero ($\tilde{a}_{12} =0$).
\end{example}

\begin{example}\label{cycles}
Theorem \ref{graph} does not apply to the diagonal entries of the inverse of an $M$-matrix. The matrix $$A=\begin{pmatrix}
        ~~5 &-1 &-1 \\
        ~~0 & ~~5&~~0\\
        ~~0&-1&~~5
\end{pmatrix}$$ is an $M$-matrix. The directed graph $\mathcal{D}(A)$ is as in Figure \ref{1st graph}. Clearly, there is no path from vertex $v_i$ to itself, for any $i, ~i=1,2,3$. However, all the diagonal entries of 
$$A^{-1}=\begin{pmatrix}
        0.2 & 0.048 & 0.04 \\
        0 & 0.2&0\\
        0&0.04&0.2
\end{pmatrix},$$ 
are positive. 

\end{example}

\begin{remark}
 A real square matrix $A$ is called a P-matrix, if each principal minor of $A$ is positive. It is quite well known that any $M$-matrix is a P-matrix \citep{Ber94}. However, Theorem \ref{graph} does not extend to the larger class of P-matrices. The matrix $A$ of Example \ref{example for inverse entries using graph} serves as a counter example.
\end{remark}

\section{Tridiagonal $M$-matrices}

For a matrix $A=(a_{ij}) \in \textbf{M}_n \mathbb{(R)}$, consider the following conditions:  \begin{equation}\label{condition on tridiagonal theorem}
a_{i(i-1)} \neq 0 \implies a_{(i+1)i} =0; \hspace{.15cm} a_{(i-1)i} \neq 0 \implies a_{i(i+1)} =0, 
\end{equation} for any $i=2,3, \dotsc, n-1.$ These were proposed and studied in \cite{BarPen19}, in relation to matrices with tridiagonal inverses. Note that (\ref{condition on tridiagonal theorem}) holds if and only if the product of any {\it two} consecutive entries along the super-diagonal and the sub-diagonal, is zero.

The next result was stated in \cite{BarPen19} without a proof. We include a proof for the sake of completeness and ready reference.

\begin{lemma}\cite[Lemma 2]{BarPen19}\label{lemma for tridiagonal} 
Let $A=(a_{ij})$ be a nonsingular tridiagonal matrix with inverse $A^{-1}=(\Tilde{a}_{ij})$. If $|i-j| =1$, then $a_{ij}=0$ implies that $\Tilde{a}_{ij}=0$ and so, if $A$ satisfies (\ref{condition on tridiagonal theorem}), then $A^{-1}$ also satisfies (\ref{condition on tridiagonal theorem}).
\end{lemma}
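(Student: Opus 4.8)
The plan is to prove the first (and main) assertion directly from the cofactor/adjugate description of the inverse, together with the tridiagonal structure, and then obtain the statement about condition~(\ref{condition on tridiagonal theorem}) as an immediate consequence. Recall that for a nonsingular $A$ we have $\Tilde{a}_{ij} = (-1)^{i+j}\det A[\{1,\dots,n\}\setminus\{j\} \mid \{1,\dots,n\}\setminus\{i\}]/\det A$; so it suffices to show that when $A$ is tridiagonal and $|i-j|=1$, the relevant $(n-1)\times(n-1)$ minor is divisible by $a_{ij}$, hence vanishes when $a_{ij}=0$. I would instead phrase this more cleanly using Theorem~\ref{entries of $A^{-1}$}: for a tridiagonal matrix, the digraph $\mathcal{D}(A)$ has edges only between consecutive vertices, so the \emph{only} path from $v_i$ to $v_j$ with $|i-j|=1$ is the single edge $(v_i,v_j)$ itself. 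Indeed any directed path in $\mathcal{D}(A)$ visits vertices whose indices change by $\pm 1$ at each step and consists of distinct vertices, so a path from $v_i$ to $v_{i+1}$ (or $v_{i-1}$) cannot make any detour without repeating a vertex; hence it is the length-one path. Then formula~(\ref{inverse entry}) collapses to a single term, $\Tilde{a}_{ij} = (-1)^{1} a_{ij}\det A[V(p)]/\det A$, which is manifestly zero whenever $a_{ij}=0$.

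There is a small caveat: Theorem~\ref{entries of $A^{-1}$} is quoted from \cite{May89} and, as stated in the excerpt, it is attributed in the context of $M$-matrices, but the formula itself (Corollary~9.1 of \cite{May89}) is valid for any nonsingular matrix; I would either invoke it in that generality or, to be safe, give the short self-contained cofactor argument. For the cofactor route, the key step is: deleting column $j$ and row $i$ from a tridiagonal matrix with $|i-j|=1$ leaves a block-triangular matrix in which the entry $a_{ij}$ sits in a pivotal position, so expanding the determinant along the appropriate row or column factors out $a_{ij}$ (all other entries in that line are zero by tridiagonality). Concretely, for $j = i-1$, the submatrix $A[\{1,\dots,n\}\setminus\{i-1\}\mid\{1,\dots,n\}\setminus\{i\}]$ decomposes into a leading principal block on indices $\{1,\dots,i-1\}$ (rows) versus $\{1,\dots,i-2,i\}$... — rather than belabor the index bookkeeping, I would simply note it is block-triangular with the scalar $a_{i,i-1}$ appearing as one diagonal block, whence the minor equals $a_{i,i-1}$ times a product of two principal minors of $A$.

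Once the first claim is established, the second is automatic: suppose $A$ satisfies~(\ref{condition on tridiagonal theorem}), fix $i \in \{2,\dots,n-1\}$, and suppose $\Tilde{a}_{i(i-1)} \neq 0$. By the first claim (contrapositive) this forces $a_{i(i-1)} \neq 0$, so by~(\ref{condition on tridiagonal theorem}) we get $a_{(i+1)i} = 0$, and applying the first claim again (now with the roles $i \mapsto i+1$, $j \mapsto i$) gives $\Tilde{a}_{(i+1)i} = 0$. The second implication in~(\ref{condition on tridiagonal theorem}) for $A^{-1}$ is proved symmetrically, using $\Tilde{a}_{(i-1)i}\neq 0 \Rightarrow a_{(i-1)i}\neq 0 \Rightarrow a_{i(i+1)}=0 \Rightarrow \Tilde{a}_{i(i+1)}=0$. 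I do not anticipate a genuine obstacle here; the only mildly delicate point is making the "the only path between consecutive vertices is the direct edge" observation rigorous (or, equivalently, handling the index ranges in the cofactor expansion cleanly, especially the boundary cases $i=2$ and $i=n-1$), but this is routine.
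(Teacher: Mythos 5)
Your proposal is correct and follows essentially the same route as the paper: the paper also deduces $a_{ij}=0 \Rightarrow \tilde{a}_{ij}=0$ for $|i-j|=1$ from formula~(\ref{inverse entry}), observing that tridiagonality forces the only possible path from $v_i$ to $v_j$ to be the direct edge, and then derives the statement about condition~(\ref{condition on tridiagonal theorem}) by exactly the chain of implications you give. (Your caveat about Theorem~\ref{entries of $A^{-1}$} is moot, since the paper states it for arbitrary nonsingular matrices.)
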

\begin{proof} Given that $A$ is a nonsingular tridiagonal matrix. Assume that $a_{ij}=0$, for $|i-j|=1$. It follows that there is no path from vertex $v_i$ to vertex $v_j$, due to the tridiagonality of $A$. So, by (\ref{inverse entry}), we then have $\tilde{a}_{ij} =0$.\\
Thus, for $|i-j|=1$, 
\begin{equation}\label{A satisfies condition implies $A^{-1}$ also satisfy}
a _{ij}=0 \implies \tilde{a}_{ij}=0.
\end{equation}
Next, assume that $A$ satisfies (\ref{condition on tridiagonal theorem}). Then, from the above, it is clear that 
$$\tilde{a}_{ij} \neq0 \implies a_{ij} \neq 0,$$ whenever $|i-j|=1$. \\
Suppose that $\tilde{a}_{i(i-1)} \neq0.$ Then, (by (\ref{A satisfies condition implies $A^{-1}$ also satisfy})) $$a_{i(i-1)} \neq 0,$$ which in turn, implies that $$a_{(i+1)i} =0$$ (by (\ref{condition on tridiagonal theorem})). Finally, this implies that $\tilde{a}_{(i+1)i} =0  $ (by (\ref{A satisfies condition implies $A^{-1}$ also satisfy})).\\
In a similar manner, we can show that $$\tilde{a}_{(i-1)i} \neq0 \implies \tilde{a}_{i(i+1)} =0 .$$ We have shown that $A^{-1}$ satisfies (\ref{condition on tridiagonal theorem}), completing the proof. 
\end{proof}

\begin{remark}
Let us make an observation regarding matrices that satisfy (\ref{condition on tridiagonal theorem}). A matrix $A \in \textbf{M}_n \mathbb{(R)}$ is said to be \textit{reducible}, if either $A$ is a zero matrix of order $1$ or if $n \geq 2$ and there exists a permutation matrix $P$ such that $$PAP^{T} = \begin{pmatrix}
    A_{11} & 0\\
    A_{12} & A_{22}
\end{pmatrix},$$
where $A_{11}$ and $A_{22}$ are square matrices and $0$ is a zero matrix. A matrix is \textit{irreducible} if it is not reducible. It now follows that if a tridiagonal matrix $A$ satisfies (\ref{condition on tridiagonal theorem}), then $A$ is reducible.
\end{remark}

\begin{theorem}\cite[Theorem 1]{BarPen19}\label{tridiagonal}
     Let $A$ be an $M$-matrix. Then $A^{-1}$ is tridiagonal if and only if $A$ is  tridiagonal and satisfies (\ref{condition on tridiagonal theorem}).
\end{theorem}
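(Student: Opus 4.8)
The plan is to prove the two directions separately, using Theorem~\ref{graph} as the main engine to translate statements about zero/nonzero entries of $A^{-1}$ into statements about paths in $\mathcal{D}(A)$.

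For the ``if'' direction, assume $A$ is a tridiagonal $M$-matrix satisfying (\ref{condition on tridiagonal theorem}). I must show $\tilde{a}_{ij}=0$ whenever $|i-j|\geq 2$. By Theorem~\ref{graph} it suffices to show there is no path from $v_i$ to $v_j$ in $\mathcal{D}(A)$ when $|i-j|\geq 2$. Since $A$ is tridiagonal, the only possible edges in $\mathcal{D}(A)$ are $(v_k,v_{k+1})$ and $(v_{k+1},v_k)$; hence any path must move by $\pm 1$ at each step, and — being a \emph{path}, i.e. a sequence of distinct vertices — it cannot reverse direction. So any path from $v_i$ to $v_j$ is forced to be the monotone run $v_i,v_{i\pm1},\dots,v_j$, traversing either super-diagonal edges $(v_k,v_{k+1})$ or sub-diagonal edges $(v_k,v_{k-1})$ throughout. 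If $j>i$ and $j-i\geq 2$, such a path would require the consecutive super-diagonal entries $a_{i(i+1)}$ and $a_{(i+1)(i+2)}$ to both be nonzero, i.e. $a_{(i-1)i}\neq 0$ read at index $i+1$... more cleanly: it would require $a_{k(k+1)}\neq 0$ and $a_{(k+1)(k+2)}\neq 0$ for $k=i$, which contradicts (\ref{condition on tridiagonal theorem}) (taking the index ``$i$'' there to be $k+1$). Symmetrically, if $j<i$ with $i-j\geq 2$, the path needs two consecutive sub-diagonal entries to be nonzero, again contradicting (\ref{condition on tridiagonal theorem}). Hence no such path exists and $\tilde{a}_{ij}=0$, so $A^{-1}$ is tridiagonal.

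For the ``only if'' direction, assume $A$ is an $M$-matrix with $A^{-1}$ tridiagonal. First, $A = (A^{-1})^{-1}$, and $A^{-1}$ is itself an $M$-matrix (the inverse of an $M$-matrix is nonnegative, and here it is a $Z$-matrix being tridiagonal with... ), so one might hope to invoke symmetry of the roles of $A$ and $A^{-1}$. A cleaner route: show directly that if $|i-j|\geq 2$ then $a_{ij}=0$. Suppose not, say $a_{ij}\neq 0$ with, w.l.o.g., $i<j$ and $j-i\geq 2$. Then $(v_i,v_j)$ is an edge of $\mathcal{D}(A)$, so there is a path from $v_i$ to $v_j$, and by Theorem~\ref{graph} we get $\tilde{a}_{ij}>0$, contradicting tridiagonality of $A^{-1}$. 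Hence $A$ is tridiagonal. It remains to show $A$ satisfies (\ref{condition on tridiagonal theorem}). Suppose it fails: there is an index $i$ with, say, $a_{i(i-1)}\neq 0$ and $a_{(i+1)i}\neq 0$ (the other case is symmetric). Then $(v_{i+1},v_i)$ and $(v_i,v_{i-1})$ are both edges of $\mathcal{D}(A)$, giving a path $v_{i+1}\to v_i\to v_{i-1}$ from $v_{i+1}$ to $v_{i-1}$; these vertices are distinct and $|(i+1)-(i-1)|=2$, so Theorem~\ref{graph} yields $\tilde{a}_{(i+1)(i-1)}>0$, again contradicting tridiagonality of $A^{-1}$. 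The symmetric failure $a_{(i-1)i}\neq 0$, $a_{i(i+1)}\neq 0$ produces the path $v_{i-1}\to v_i\to v_{i+1}$ and hence $\tilde{a}_{(i-1)(i+1)}>0$. Thus (\ref{condition on tridiagonal theorem}) must hold.

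The only genuinely delicate point — and the step I expect to require the most care in writing up — is the ``no turning back'' argument in the ``if'' direction: one must use that the entities in Theorem~\ref{entries of $A^{-1}$} are \emph{paths} (sequences of \emph{distinct} vertices, as in the Notation), so that in a tridiagonal digraph the vertex indices along a path change by exactly $\pm 1$ at each step and, by distinctness, must be strictly monotone; this is what forbids a path of the form $v_i\to v_{i+1}\to v_i\to\cdots$ and pins the path down to the unique monotone candidate. Everything else is a direct application of Theorem~\ref{graph} together with the structural constraints of tridiagonality and condition (\ref{condition on tridiagonal theorem}).
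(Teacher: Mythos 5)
Your proposal is correct and follows essentially the same route as the paper: both directions are reduced, via Theorem~\ref{graph}, to the existence or non-existence of paths in $\mathcal{D}(A)$, with the necessity part using the one- and two-edge paths and the sufficiency part using the fact that in a tridiagonal digraph any path is a monotone run whose existence would violate (\ref{condition on tridiagonal theorem}). The only difference is that you spell out the ``paths are monotone because vertices are distinct'' step, which the paper asserts more tersely.
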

\begin{proof}
Given that $A$ is an $M$-matrix.\\
\underline{{ Necessity}:}  Assume that $A^{-1}$ is tridiagonal. If $A$ is not tridiagonal, then there exist indices $i,j$ such that $|i-j|>1$ with $a_{ij} \neq 0$. This means that there is a path from $v_i$ to $v_j$ in $\mathcal{D}(A)$. 
Then by Theorem \ref{graph}, $$\tilde{a}_{ij} \neq 0,$$ a contradiction to the tridiagonality of $A^{-1}$. So, $A$ is tridiagonal.\\
To prove the second part, suppose that $a_{i(i-1)} \neq 0$. If $a_{(i+1)i} \neq 0$, then there is a path from $v_{i+1}$ to $v_{i-1}$. \\

\begin{figure}[H]
\centering 
\includegraphics{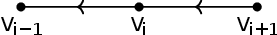}
\caption{Path from $v_{i+1}$ to $v_{i-1}$ in $\mathcal{D}(A)$.} 
\label{2nd graph}
\end{figure}

Then by Theorem \ref{graph}, $$\tilde{a}_{(i+1)(i-1)} \neq 0,$$ which contradicts that $A^{-1}$ is tridiagonal. So, our assumption is wrong, thereby proving that $$a_{i(i-1)} \neq 0 \implies a_{(i+1)i} =0.$$
The proof of the other implication $a_{(i-1)i}\neq 0 \implies a_{i(i+1)}=0$ is entirely similar and hence omitted.\\
\underline{{Sufficiency}:} Suppose that $A$ is tridiagonal and satisfies (\ref{condition on tridiagonal theorem}). 
Then the only possible directed path from the  vertex $v_i$ to the vertex $v_j$ is when there is a directed edge from $v_i$ to $v_j.$ This means that $j=i+1.$ Thus, whenever $j>i+1$, there is no path from $v_i$ to $v_j$. In an entirely similar manner, it follows that there is no path from $v_i$ to $v_j$, for $j<i-1$. Thus, there is no  path from $v_i$ to $v_j$ in $\mathcal{D}(A)$, for all $|i-j|>1$. By Theorem \ref{graph}, $\tilde{a}_{ij}=0$, for all $|i-j|>1$, proving the tridiagonality of $A^{-1}$.
\end{proof}

In view of Theorem \ref{tridiagonal} and Lemma \ref{lemma for tridiagonal}, we have the following summary.

\begin{theorem}
Let $A$ be an $M$-matrix. Then, the following are equivalent:
\begin{enumerate}
    \item $A^{-1}$ is tridiagonal.
    \item $A$ is tridiagonal and satisfies (\ref{condition on tridiagonal theorem}).
    \item $A^{-1}$ is tridiagonal and satisfies (\ref{condition on tridiagonal theorem}).
\end{enumerate}
\end{theorem}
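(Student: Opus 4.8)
The plan is to establish the cycle of implications $(1)\Rightarrow(2)\Rightarrow(3)\Rightarrow(1)$, drawing on the two results already available to us: Theorem \ref{tridiagonal} and Lemma \ref{lemma for tridiagonal}. The implication $(1)\Rightarrow(2)$ is exactly the necessity direction of Theorem \ref{tridiagonal}, so nothing new is required there. Similarly, $(2)\Rightarrow(1)$ is the sufficiency direction of Theorem \ref{tridiagonal}, which will be needed to close the loop from $(3)$.

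For $(2)\Rightarrow(3)$, I would argue as follows. Assume $A$ is tridiagonal and satisfies (\ref{condition on tridiagonal theorem}). By the sufficiency part of Theorem \ref{tridiagonal}, $A^{-1}$ is tridiagonal. Since $A$ is a nonsingular tridiagonal matrix satisfying (\ref{condition on tridiagonal theorem}), Lemma \ref{lemma for tridiagonal} tells us directly that $A^{-1}$ also satisfies (\ref{condition on tridiagonal theorem}). Hence $A^{-1}$ is tridiagonal and satisfies (\ref{condition on tridiagonal theorem}), which is precisely statement $(3)$.

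For $(3)\Rightarrow(1)$, the implication is immediate and almost vacuous: if $A^{-1}$ is tridiagonal and satisfies (\ref{condition on tridiagonal theorem}), then in particular $A^{-1}$ is tridiagonal, which is statement $(1)$. One small point worth noting: to invoke Theorem \ref{tridiagonal} and Lemma \ref{lemma for tridiagonal} we must know $A$ is invertible, but this is guaranteed since $A$ is an $M$-matrix (item 4 of Theorem \ref{invertible M matrix}), and $(A^{-1})^{-1}=A$, so everything is well-defined.

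I do not anticipate any genuine obstacle here — the theorem is a bookkeeping consolidation of results already proved. The only thing to be careful about is making sure each arrow uses a result in the direction it was actually stated: $(1)\Rightarrow(2)$ and $(2)\Rightarrow(1)$ come from the two halves of Theorem \ref{tridiagonal}, the new content $A^{-1}$ satisfies (\ref{condition on tridiagonal theorem}) comes from Lemma \ref{lemma for tridiagonal}, and the trivial forgetful implication $(3)\Rightarrow(1)$ closes the cycle. Writing it as $(1)\Rightarrow(2)\Rightarrow(3)\Rightarrow(1)$ keeps the exposition shortest.
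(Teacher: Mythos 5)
Your proposal is correct and matches the paper's intent exactly: the paper states this theorem as an immediate summary of Theorem \ref{tridiagonal} and Lemma \ref{lemma for tridiagonal}, and your cycle $(1)\Rightarrow(2)\Rightarrow(3)\Rightarrow(1)$ is precisely the bookkeeping the authors leave implicit. Nothing further is needed.
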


\begin{example}
Suppose that the tridiagonal matrix given by 
$$A=\begin{pmatrix}
        a_{11} & a_{12} &&\\
        0& a_{22} & 0 &\\
         &a_{32} &a_{33} &a_{34} \\
        &&0&a_{44} 
        \end{pmatrix},$$ 
is an $M$-matrix, where $a_{ij} \neq 0.$
Note that this means that all $a_{ii}$ are positive, while for $i \neq j$, all $a_{ij}$ are negative. It is easy to verify that $A$ satisfies (\ref{condition on tridiagonal theorem}) and $A^{-1}$ is given by the tridiagonal matrix
$$A^{-1}=\begin{pmatrix}
           \frac{1}{a_{11}} & -\frac{a_{12}}{a_{11}a_{22}} & &\\
            0&~~ \frac{1}{a_{22}} &0&\\
           & -\frac{ a_{32}}{a_{22}a_{33}} & \frac{1}{a_{33}} & -\frac{a_{34}}{a_{33}a_{44}} \\
           &&0& ~~\frac{1}{a_{44}} 
        
\end{pmatrix}.$$ 
\end{example}

\begin{example}
Let $B$ be the matrix given below, obtained by changing the matrix $A$ of the previous example, in exactly one entry: 
$$B=\begin{pmatrix}
        a_{11} & a_{12} && \\
        a_{21}& a_{22} & 0 &\\
         &a_{32} &a_{33} &a_{34} \\
        &&0&a_{44}      
        \end{pmatrix}.$$
If $\gamma:=a_{11}a_{22} - a_{12}a_{21}>0,$ then $B$ is a tridiagonal $M$-matrix that does not satisfy (\ref{condition on tridiagonal theorem}). We have 
        $$B^{-1}=\frac{1}{\gamma}\begin{pmatrix}
        ~~a_{22} & -a_{12} & 0&0\\
       -a_{21} & ~~a_{11}&0&0\\
       \frac{a_{21}a_{32}}{a_{33}}  & -\frac{a_{11}a_{32}}{a_{33}} & \frac{\gamma}{a_{33}} & -\frac{\gamma a_{34}}{a_{33}a_{44}}\\
       0&0&0& ~~\frac{\gamma}{a_{44}}  
    \end{pmatrix},$$ which is not tridiagonal.
\end{example}

\section{Pentadiagonal $M$-matrices}

It is natural to examine the extent to which an analogue of Theorem \ref{tridiagonal} for the case of pentadiagonal matrices could be obtained. In this connection, a {\it necessary} condition for an $M$-matrix to have a pentadiagonal inverse was obtained in \cite[Theorem 2]{BarPen19}. The authors in op. cit. consider restrictions (like conditions (\ref{condition on tridiagonal theorem})) on the entries of a matrix $A,$ which we state below.\\
For $2 \leq i \leq n-2,$ we have the following {\it first order} conditions:
\begin{equation}\label{1st equ of first order}
    a_{i(i-1)}a_{(i+1)i} \neq 0 \implies a_{(i+2)(i+1)} =0
\end{equation}
and 
\begin{equation}\label{2nd equ of first order}
    a_{i(i+1)}a_{(i+1)(i+2)} \neq 0 \implies a_{(i-1)i} =0. 
\end{equation}
For $3 \leq i \leq n-2$, we consider, the following, the first two of which may be referred to as {\it second order} requirements:
\begin{equation}\label{1st equ of second order}
    \det A[i-2,i|i-2,i-1] \neq 0 \implies a_{(i+2)i} =0,
\end{equation}
\begin{equation}\label{2nd equ of second order}
    \det A[i,i+2|i+1,i+2] \neq 0 \implies a_{(i-2)i} =0,
\end{equation}
\begin{equation}\label{1st equ of third order}
    a_{i(i-2)} \neq 0 \implies a_{(i+2)i} = 0 \text{ and } \det A[i-1,i+1|i-1,i]=0
\end{equation}
and 
\begin{equation}\label{2nd equ of third order}
    a_{i(i+2)} \neq 0 \implies a_{(i-2)i} = 0 \text{ and } \det A[i-1,i+1|i,i+1]=0.
\end{equation}
The last two implications may be thought of as {\it third order} constraints. In a manner analogous to (\ref{condition on tridiagonal theorem}), the conditions (\ref{1st equ of first order}) and (\ref{2nd equ of first order}) imply that the product of any {\it three} consecutive terms in the super-diagonal and the sub-diagonal entries, is zero. 

\begin{theorem}\cite[Theorem 2]{BarPen19}\label{pentadiagonal}
    Let $A$ be an $M$-matrix. If $A^{-1}$ is pentadiagonal then $A$ is pentadiagonal and conditions (\ref{1st equ of first order})-(\ref{2nd equ of third order}) hold.
\end{theorem}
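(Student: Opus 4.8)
The plan is to mimic the structure of the proof of Theorem~\ref{tridiagonal}, exploiting Theorem~\ref{graph} as the central tool. The assumption throughout is that $A$ is an $M$-matrix whose inverse $A^{-1}=(\tilde a_{ij})$ is pentadiagonal, i.e. $\tilde a_{ij}=0$ whenever $|i-j|>2$. The key translation is: by Theorem~\ref{graph}, $\tilde a_{ij}=0$ with $i\neq j$ forces that there is \emph{no} directed path from $v_i$ to $v_j$ in $\mathcal D(A)$. So for every pair $(i,j)$ with $|i-j|>2$ we obtain a ``path-absence'' constraint on $\mathcal D(A)$, and each of the seven displayed conditions (\ref{1st equ of first order})--(\ref{2nd equ of third order}) will be extracted by contradiction: assume the hypothesis of the implication holds but the conclusion fails, build an explicit directed path from some $v_i$ to some $v_j$ with $|i-j|>2$, and invoke Theorem~\ref{graph} to get $\tilde a_{ij}\neq 0$, contradicting pentadiagonality.

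First I would dispose of the pentadiagonality of $A$ itself exactly as in Theorem~\ref{tridiagonal}: if $a_{ij}\neq 0$ for some $|i-j|>2$, then the single edge $(v_i,v_j)$ is a path, so $\tilde a_{ij}\neq 0$, a contradiction. Next I would handle the first-order conditions. For (\ref{1st equ of first order}), suppose $a_{i(i-1)}a_{(i+1)i}\neq 0$ and $a_{(i+2)(i+1)}\neq 0$; then $(v_{i+2},v_{i+1}),(v_{i+1},v_i),(v_i,v_{i-1})$ are edges, giving a path $p(v_{i+2}\to v_{i-1})$ with $|(i+2)-(i-1)|=3>2$, hence $\tilde a_{(i+2)(i-1)}\neq 0$, contradiction. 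Condition (\ref{2nd equ of first order}) is the mirror image along the super-diagonal (path $v_{i-1}\to v_i\to v_{i+1}\to v_{i+2}$). These are the ``routine'' cases and should be dispatched quickly, with Figure~\ref{2nd graph}-style pictures if desired.

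The substantive part is the second- and third-order conditions, where the obstruction is no longer a single entry being nonzero but a $2\times2$ minor or a conjunction of conditions. For (\ref{1st equ of third order}), assume $a_{i(i-2)}\neq 0$; the edge $(v_i,v_{i-2})$ plus whatever edge can reach $v_i$ from distance $\ge 2$, or leave $v_{i-2}$, must be blocked: if $a_{(i+2)i}\neq 0$ then $v_{i+2}\to v_i\to v_{i-2}$ is a path of the forbidden type, forcing $a_{(i+2)i}=0$; and if $\det A[i-1,i+1|i-1,i]=a_{(i-1)(i-1)}a_{(i+1)i}-a_{(i-1)i}a_{(i+1)(i-1)}\neq 0$ one must argue, via the path-sum formula (\ref{inverse entry}) rather than Theorem~\ref{graph} alone, that $\tilde a_{(i+1)(i-2)}\neq 0$ — here the $2\times2$ determinant appears because it is (up to sign) the relevant $\det A[V(p)]$ factor or a sum of two path contributions $v_{i+1}\to v_i\to v_{i-2}$ and $v_{i+1}\to v_{i-1}\to \cdots$, and one needs that no cancellation occurs. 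Since $A$ is an $M$-matrix, all the $\det A[V(p)]$ factors are positive and all path-products $(-1)^{l(p)}A[p]$ are nonnegative, so every term in (\ref{inverse entry}) has the same sign; thus a single nonzero term suffices and cancellation is impossible. This sign-definiteness is precisely what makes the minor conditions come out as stated, and recognizing which minor equals which $\det A[V(p)]$ (after deleting the vertices on a length-2 or length-3 path in a pentadiagonal $\mathcal D(A)$) is the main bookkeeping obstacle. Conditions (\ref{2nd equ of second order}) and (\ref{2nd equ of third order}) follow by the super-diagonal reflection $i\mapsto -i$, and (\ref{1st equ of second order}) is the analogue of (\ref{1st equ of third order}) with the roles of the length-2 path interchanged.

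I expect the hard part to be exactly this matching of the $2\times 2$ minors $\det A[i-2,i|i-2,i-1]$, $\det A[i,i+2|i+1,i+2]$, $\det A[i-1,i+1|i-1,i]$, $\det A[i-1,i+1|i,i+1]$ with the determinant factors $\det A[V(p)]$ that appear in (\ref{inverse entry}) for the offending inverse entry, because in a pentadiagonal (but not necessarily tridiagonal) digraph there can be more than one directed path between two far-apart vertices, and one must check that each relevant path's vertex-complement determinant — which is a principal minor of the $M$-matrix $A$, hence positive — multiplies a nonzero path-product to give a strictly positive contribution. Once the no-cancellation principle is invoked, each of the seven conditions reduces to exhibiting one concrete path and reading off one minor, and the proof is complete.
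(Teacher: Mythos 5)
Your proposal is correct and follows essentially the same route as the paper: pentadiagonality of $A^{-1}$ together with Theorem~\ref{graph} yields path-absence constraints in $\mathcal{D}(A)$, and each of (\ref{1st equ of first order})--(\ref{2nd equ of third order}) is obtained by contradiction by exhibiting a directed path from $v_i$ to $v_j$ with $|i-j|>2$. The one clarification worth recording is that the $2\times 2$ minors enter not as $\det A[V(p)]$ factors (those are \emph{principal} minors, whereas $\det A[i-1,i+1|i-1,i]$ etc.\ are not) but precisely via your second alternative: a nonzero minor forces at least one of its two monomials to be nonzero, and each monomial supplies a path segment (e.g.\ $v_{i+1}\to v_i$ directly, or $v_{i+1}\to v_{i-1}\to v_i$), after which Theorem~\ref{graph} alone finishes the argument without any further appeal to the sum in (\ref{inverse entry}).
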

\begin{proof}
The matrix $A$ is an $M$-matrix with $A^{-1}$ being pentadiagonal.\\
If possible, let $A$ be not pentadiagonal. Then there exist indices $i,j$ such that $|i-j|>2$ with $a_{ij} \neq 0$ i.e., there is a path from vertex $v_i$ to vertex $v_j$ in $\mathcal{D}(A)$. Then by Theorem \ref{graph}, for this index pair $i,j$, we have $$\tilde{a}_{ij} \neq 0,$$ which contradicts that $A^{-1}$ is pentadiagonal. So our assumption is wrong, proving that $A$ is pentadiagonal.\\
Next, we verify that (\ref{1st equ of first order})-(\ref{2nd equ of third order}) hold. Firstly, suppose that $a_{i(i-1)}a_{(i+1)i} \neq 0$ (so that both $a_{i(i-1)}$ and $a_{(i+1)i}$ are nonzero). If possible, let $a_{(i+2)(i+1)} \neq 0$. \\

\begin{figure}[H]
\centering 
\includegraphics{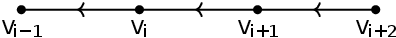}
\caption{Path from $v_{i+2}$ to $v_{i-1}$ in $\mathcal{D}(A)$.} 
\label{3rd graph}
\end{figure}

It follows that there is a path from $v_{i+2}$ to $v_{i-1}$ in ${\cal D}(A)$ (Figure \ref{3rd graph}), so that $$\tilde{a}_{(i+2)(i-1)} \neq 0.$$ This however, contradicts that $A^{-1}$ is pentadiagonal. Thus, (\ref{1st equ of first order}) holds. \\
%the following implication holds: $$a_{i(i-1)}a_{(i+1)i} \neq 0 \implies a_{(i+2)(i+1)} = 0.$$
Now, assume that $\det A[i-2,i|i-2,i-1] \neq 0$ i.e.,$$a_{(i-2)(i-2)}a_{i(i-1)} - a_{i(i-2)}a_{(i-2)(i-1)} \neq 0.$$ Since $A$ is an $M$-matrix, it then follows that the left hand side above is negative. Further, at least one of the terms is nonzero. In either case, we infer that there is a path from $v_i$ to $v_{i-1}$ (Figure \ref{4th graph}, where the path is a single edge, and Figure \ref{5th graph}). 

\begin{figure}[H]
\centering
    \subfigure[The case $a_{i(i-1)} \neq 0.$]{%
	\resizebox*{5cm}{!}{\includegraphics{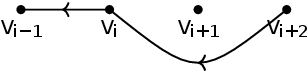}}\label{4th graph}}
 \hspace{5pt}
    \subfigure[The case $a_{i(i-2)}a_{(i-2)(i-1)} \neq 0.$]{%
	\resizebox*{5cm}{!}{\includegraphics{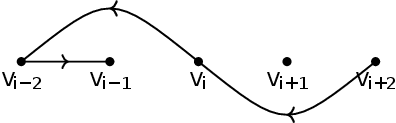}} \label{5th graph}}
\caption{Path from $v_{i+2}$ to $v_{i-1}$ in $\mathcal{D}(A).$}
\label{4th and 5th graph}
\end{figure}

Let $a_{(i+2)i} \neq 0$.
It is clear, in both the cases, that there is a path from $v_{i+2}$ to $v_{i-1}$ (Figure \ref{4th and 5th graph}), so that $$\tilde{a}_{(i+2)(i-1)} \neq 0.$$ However, this contradicts that $A^{-1}$ is pentadiagonal. So, (\ref{1st equ of second order}) holds. \\
Next, let $a_{i(i-2)} \neq 0$. If possible let,
$a_{(i+2)i} \neq 0$. Then there is a path from $v_{i+2}$ to $v_{i-2}$ in ${\cal D}(A)$ (Figure \ref{6th graph}), so that $$\tilde{a}_{(i+2)(i-2)} \neq 0.$$ This, again is a contradiction to the fact that $A^{-1}$ is pentadiagonal, allowing us to conclude that $a_{(i+2)i} =0$.\\

\begin{figure}[H]
\centering 
\includegraphics{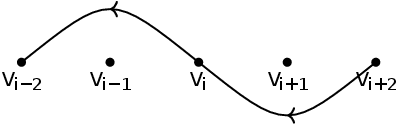}
\caption{Path from $v_{i+2}$ to $v_{i-2}$ in $\mathcal{D}(A).$} 
\label{6th graph}
\end{figure}

Next, we prove $\det A[i-1,i+1|i-1,i]=0$. If this does not hold, then 
$$a_{(i-1)(i-1)}a_{(i+1)i} - a_{(i-1)i}a_{(i+1)(i-1)} \neq 0$$ and hence at least one of them is nonzero, as argued earlier. In both the cases there is a path from $v_{i+1}$ to $v_{i}$. Also, we have $a_{i(i-2)} \neq 0$.

\begin{figure}[H]
\centering
    \subfigure[The case $a_{(i+1)i} \neq 0.$]{%
	\resizebox*{5cm}{!}{\includegraphics{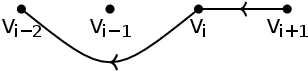}}\label{7th graph}}
\hspace{5pt}
    \subfigure[The case $a_{(i-1)i}a_{(i+1)(i-1)} \neq 0.$]{%
	\resizebox*{5cm}{!}{\includegraphics{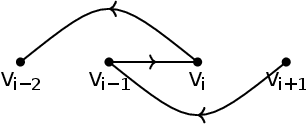}} \label{8th graph}}
\caption{Path from $v_{i+1}$ to $v_{i-2}$ in $\mathcal{D}(A).$}
\label{7th and 8th graph}
\end{figure}

Clearly, there is a path from $v_{i+1}$ to $v_{i-2}$ in $\mathcal{D}(A)$ (Figure \ref{7th and 8th graph}), and so $$\tilde{a}_{(i+1)(i-2)} \neq 0,$$ contradicting the pentadiagonality of $A^{-1}$. We have shown that (\ref{1st equ of third order}) holds.\\
By a similar argument, we can show that $A$ satisfies (\ref{2nd equ of first order}), (\ref{2nd equ of second order}) and (\ref{2nd equ of third order}), completing the proof. 
\end{proof}

We conclude this note with the following remark.

\begin{remark}
The question of whether the converse of Theorem \ref{pentadiagonal} is true, remains open. This will be investigated in future.
\end{remark}

\section*{Acknowledgements}
The authors thank the referees for the suggestions and comments that have improved the readability.

\section*{Disclosure statement}
No potential conflict of interest was reported by the authors.

\section*{Funding} 
Samapti Pratihar acknowledges financial support received through the Prime Minister's Research Fellowship (PMRF), Ministry of Education, Government of India.

\end{document}